\newcommand{\MG}[1]{\textcolor{black}{#1}}
\begin{document}

\title*{Optimizing transmission conditions for \MG{multiple} subdomains
  in the Magnetotelluric Approximation of Maxwell's equations}
\titlerunning{Optimizing transmission conditions for many subdomains}

\author{V. Dolean, M.J. Gander and A. Kyriakis}

\institute{Alexandros Kyriakis \at University of Strathclyde,  \email{alexandros.kyriakis@strath.ac.uk}
\and Victorita Dolean \at University of Strathclyde and University Côte d'Azur  \email{work@victoritadolean.com} 
\and Martin J. Gander \at Université de Genève \email{martin.gander@unige.ch}}
 
%
%
\maketitle

\abstract{Classically transmission conditions between subdomains are
  optimized for a simplified two subdomain decomposition to obtain
  optimized Schwarz methods for many subdomains. We investigate here
  if such a simplified optimization suffices for the magnetotelluric
  approximation of Maxwell's equation which leads to a complex
  diffusion problem.  We start with a direct analysis for 2 and 3
  subdomains, and present asymptotically optimized transmission
  conditions in each case. We then optimize transmission conditions
  numerically for 4, 5 and 6 subdomains and observe the same
  asymptotic behavior of optimized transmission conditions. We finally
  use the technique of limiting spectra to optimize for a very large
  number of subdomains in a strip decomposition. Our analysis shows
  that the asymptotically best choice of transmission conditions is
  the same in all these situations, only the constants differ
  slightly. It is therefore enough for such diffusive type
  approximations of Maxwell's equations, which include the special case
  of the Laplace and screened Laplace equation, to optimize
  transmission parameters in the simplified two subdomain
  decomposition setting to obtain good transmission conditions for
  optimized Schwarz methods for more general decompositions.}

\section{Optimized Schwarz for the Magnetotelluric Approximation}\label{sec:1}

Wave propagation phenomena are ubiquitous in science and
engineering. In Geophysics, the magnetotelluric approximation of
Maxwell's equations is an important tool to extract information about
the spatial variation of electrical conductivity in the Earth's
subsurface. This approximation results in a complex diffusion equation
\cite{Donzelli:2019:ASM},
\begin{equation}\label{eq:1}
  \Delta u - (\sigma - i\varepsilon )u = f, \quad \mbox{in a domain $\Omega$},
\end{equation}
where $f$ is the source function, and $\sigma$ and $\varepsilon$ are
strictly positive constants\footnote{In the magnetotelluric
  approximation we have $\sigma=0$, but we consider the slightly more
  general case here. Note also that the zeroth order term in
  \eqref{eq:1} is much more benign than the zeroth order term of
  opposite sign in the Helmholtz equation, see e.g. \cite{Ernst:2012:WDS}.}.

To study Optimized Schwarz Methods (OSMs) for \eqref{eq:1}, we use \MG{a}
rectangular domain $\Omega$ given by the union of rectangular
  subdomains $\Omega_j:= ({a_j},{b_j}) \times (0,\hat L)$,
  $j=1,2,\ldots,J$, where $a_j=(j-1)L-\frac{\delta}{2}$ and
  $b_j=jL+\frac{\delta}{2}$, and $\delta$ is the overlap, like in
  \cite{Chaouqui:2018:OSC}. Our OSM computes for iteration index
  $n=1,2,\hdots$
\begin{equation} \label{eq:2}
  \begin{array}{rcll}
  \Delta u_j^{n} -(\sigma-i \varepsilon) u_j^{n}&=&f&\mbox{in $\Omega_j$},\\
  -\partial_x u_j^{n}+p_j^{-} u_j^{n}&=&-\partial_x u_{j-1}^{n-1}+p_j^{-}u_{j-1}^{n-1}
  \quad &\mbox{at $x=a_j$}, \\
  \partial_x u_j^{n}+p_j^{+} u_j^{n}&=&\partial_x u_{j+1}^{n-1}+p_j^{+} u_{j+1}^{n-1}
  \quad &\mbox{at $x=b_j$},
\end{array} 
\end{equation}
where $p_j^{-}$ and $p_j^{+}$ are strictly positive parameters in the
so called 2-sided OSM, see e.g. \cite{Gander:2007:OSM}, and we have at
the top and bottom homogeneous Dirichlet boundary conditions, and on
the left and right homogeneous Robin boundary conditions, i.e we put
for simplicity of notation $u_0^{n-1}=u_{J+1}^{n-1}=0$ in
\eqref{eq:2}. Note that the parameters $p_{j}^{-}$,$p_{j}^{+}$ are real and not complex (as one would expect in the case of a complex problem) for the sake of simplicity in our analysis. The Robin parameters are fixed at the domain boundaries
$x=a_1$ and $x=b_J$ to $p_1^-= p_a$ and $p_J^-+= p_b$. As $p_a$, $p_b$ tend to infinity, this is equivalent to imposing Dirichlet conditions. By linearity,
it suffices to study the homogeneous equations, $f=0$, and analyze
convergence to zero of the OSM \eqref{eq:2}.  Expanding the
homogeneous iterates in a Fourier series
$u_j^{n}(x,y)=\sum_{m=1}^{\infty}
v_j^{n}(x,\tilde{k})\sin(\tilde{k}y)$ where
$\tilde{k}=\frac{m\pi}{\hat{L}}$ to satisfy the homogeneous Dirichlet
boundary conditions at the top and bottom, we obtain for the Fourier
coefficients the equations
\begin{equation}
\label{eq:3}
  \begin{array}{rcll}
  \partial_{xx}v_j^{n}-(\tilde{k}^{2}+\sigma-i\varepsilon)
  v_j^{n}&=&0 & x\in  (a_j,b_j),  \\
  -\partial_x v_j^{n}+p_j^{-} v_j^{n}&=&
  -\partial_x v_{j-1}^{n-1}+p_j^{-} v_{j-1}^{n-1}\quad
  & \mbox{at $x=a_j$},     \\
  \partial_x v_j^{n}+p_j^{+} v_j^{n}&=&
  \partial_x v_{j+1}^{n-1}+p_j^{+} v_{j+1}^{n-1}\quad &
  \mbox{at $x=b_j$}.
\end{array} 
\end{equation}
The general solution of the differential equation is $
  v_j^{n}(x,\tilde{k})=\tilde{c}_j e^{-\lambda(\tilde{k})x}
    +\tilde{d}_j e^{\lambda(\tilde{k})x},$  where
$\lambda=\lambda(\tilde{k})=\sqrt{\tilde{k}^{2}+\sigma-i\varepsilon}$.
We next define the Robin traces,
$\mathcal{R}_{-}^{n-1}(a_j,\tilde{k}):= -\partial_x
v_{j-1}^{n-1}(a_{j},\tilde{k})+p_j^{-}v_{j-1}^{n-1}(a_{j},\tilde{k})$
and $\mathcal{R}_{+}^{n-1}(b_j,\tilde{k}):= \partial_x
v_{j+1}^{n-1}(b_j,\tilde{k})+p_j^{+}v_{j+1}^{n-1}(b_j,\tilde{k})$. Inserting
the solution into the transmission conditions in \eqref{eq:3},  a linear system arises where the unknowns  are $\tilde{c}_j $ and $\tilde{d}_j $ , whose solution is
\begin{align*}
  \tilde{c}_j &= \frac{1}{D_j}({e^{\lambda{b_j}}}(p_j^{+} + \lambda )
  \mathcal{R}_{-}^{n-1}(a_j,\tilde{k})
  -e^{\lambda{a_j}}(p_j^{-} - \lambda ){\mathcal{R}_{+}^{n-1}(b_j,\tilde{k})}),
  \\
  \tilde{d}_j &= \frac{1}{D_j}( - {e^{ - \lambda {b_j}}}(p_j^{+}- \lambda)
  \mathcal{R}_{-}^{n-1}(a_j,\tilde{k}) + {e^{ - \lambda {a_j}}}(p_j^{-} + \lambda )
  \mathcal{R}_{+}^{n-1}(b_j,\tilde{k})), 
\end{align*}
where $D_j:=(\lambda+p_j^{+})(\lambda+p_j^{-})e^{\lambda (L + \delta )}
  - (\lambda-p_j^{+})(\lambda-p_j^{-})e^{ - \lambda (L + \delta )}$.
We thus arrive for the Robin traces in the OSM at the iteration formula
\begin{align*}
{\mathcal{R}_{-}^{n}(a_j,\tilde{k})} 
&= \alpha_j^-\mathcal{R}_{-}^{n - 1}({a_{j-1}},\tilde{k})
+\beta_j^-\mathcal{R}_{+}^{n - 1}({b_{j-1}},\tilde{k}), \, j=2,\MG{\ldots},J\MG{,}\\
{\mathcal{R}_{+}^{n}(b_j,\tilde{k})} 
 &= \beta_j^+\mathcal{R}_{-}^{n - 1}({a_{j+1}},\tilde{k})
 +\alpha_j^+ \mathcal{R}_{+}^{n - 1}({b_{j+1}},\tilde{k}),  \, j=1,\MG{\ldots},J-1, 
\end{align*}
where 
\begin{align*}
\alpha_j^-\MG{:=}\frac{(\lambda+p_{j-1}^{+})(\lambda + p_j^{-})e^{\lambda \delta}-(\lambda-p_{j-1}^{+})(\lambda-p_j^{-})e^{-\lambda \delta}}
{(\lambda\!+\!p_{j-1}^{+})(\lambda\!+\!p_{j-1}^{-})e^{\lambda (L + \delta )}\!-\! (\lambda\!-\!p_{j-1}^{+})(\lambda\!-\!p_{j-1}^{-})e^{ - \lambda (L + \delta )}},\, j=2,\MG{\ldots},J\MG{,} \\
\alpha_j^+ \MG{:=}\frac{(\lambda+p_{j+1}^{-})(\lambda + p_j^{+})e^{\lambda \delta}-(\lambda-p_{j+1}^{-})(\lambda-p_j^{+})e^{-\lambda \delta}}
{(\lambda\!+\!p_{j+1}^{+})(\lambda\!+\!p_{j+1}^{-})e^{\lambda (L + \delta )}\!-\! (\lambda\!-\!p_{j+1}^{+})(\lambda\!-\!p_{j+1}^{-})e^{ - \lambda (L + \delta )}},\, j=1,\MG{\ldots},J-1\MG{,}
\end{align*}
\begin{align*}
\beta_j^{-}:=\frac{(\lambda + p_j^-)(\lambda - p_{j-1}^-) e^{-\lambda L}-(\lambda -p_j^-)(\lambda + p_{j-1}^-) e^{\lambda L}}
{(\lambda\!+\!p_{j-1}^{+})(\lambda\!+\!p_{j-1}^{-})e^{\lambda (L + \delta )}\!-\! (\lambda\!-\!p_{j-1}^{+})(\lambda\!-\!p_{j-1}^{-})e^{ - \lambda (L + \delta )}},\, j=2,\MG{\ldots},J,\\
\beta_j^{+}:=\frac{(\lambda + p_j^+)(\lambda - p_{j+1}^+) e^{-\lambda L}-(\lambda -p_j^+)(\lambda + p_{j+1}^+) e^{\lambda L}}
{(\lambda\!+\!p_{j+1}^{+})(\lambda\!+\!p_{j+1}^{-})e^{\lambda (L + \delta )}\!-\! (\lambda\!-\!p_{j+1}^{+})(\lambda\!-\!p_{j+1}^{-})e^{ - \lambda (L + \delta )}},\, j=1,\MG{\ldots},J-1\MG{.}
\end{align*}
Defining the matrices
$$
  T_j^1:=\left[ {\begin{array}{cc}
    \alpha_j^-&\beta_j^{-}\\
    0&0
  \end{array}} \right], \, j=2,..,J \quad \mbox{and}\quad 
  T_j^2:=\left[ {\begin{array}{cc}
  0&0\\
\beta_j^{+}&\alpha_j^+
\end{array}} \right],\, j=1,..,J-1\MG{,}
$$
we can write the OSM in substructured form \MG{(keeping the
  first and last rows and columns to make the block structure
  appear), namely}
\begin{equation}\label{SubstructuredForm}
\underbrace{\left[ {\begin{array}{c}
0\\
\mathcal{R}_{+}^n({b_1},\tilde{k})\\
\mathcal{R}_{-}^n({a_2},\tilde{k})\\
\mathcal{R}_{+}^n({b_2},\tilde{k})\\
 \vdots \\
\mathcal{R}_{-}^n({a_j},\tilde{k})\\
\mathcal{R}_{+}^n({b_j},\tilde{k})\\
 \vdots \\
\mathcal{R}_{-}^n({a_{N - 1}},\tilde{k})\\
\mathcal{R}_{+}^n({b_{N - 1}},\tilde{k})\\
\mathcal{R}_{-}^n({a_N},\tilde{k})\\
0
    \end{array}} \right] }_\text{${{{\cal R} }^n}$}=\underbrace{ \left[\arraycolsep0.5em {\begin{array}{ccccccc}
        \\[-0.2em]
 &T_1^2& & & & &  \\[0.7em]
T_2^1& &T_2^2& & & &  \\[0.7em]
 &\ddots& &\ddots& & &  \\[0.7em]
 & &T_j^1& &T_j^2& &  \\[0.7em]
 & & &\ddots& &\ddots&  \\[0.7em]
 & & & &T_{N-1}^1& &T_{N-1}^2 \\[0.7em]
 & & & & &T_N^1& \\[1em]
\end{array}} \right]}_\text{$T$}  \underbrace{\left[ {\begin{array}{*{20}{c}}
0\\
\mathcal{R}_{+}^{n - 1}({b_1},\tilde{k})\\
\mathcal{R}_{-}^{n - 1}({a_2},\tilde{k})\\
\mathcal{R}_{+}^{n - 1}({b_2},\tilde{k})\\
 \vdots \\
\mathcal{R}_{-}^{n - 1}({a_j},\tilde{k})\\
\mathcal{R}_{+}^{n - 1}({b_j},\tilde{k})\\
 \vdots \\
\mathcal{R}_{-}^{n - 1}({a_{N - 1}},\tilde{k})\\
\mathcal{R}_{+}^{n - 1}({b_{N - 1}},\tilde{k})\\
\mathcal{R}_{-}^{n - 1}({a_N},\tilde{k})\\
0
    \end{array}} \right]}_\text{${{{\cal R} }^{n-1}}$}.
\end{equation}
If the parameters $p_j^{\pm}$ are constant over all the interfaces,
\MG{and we eliminate} the first and the last row \MG{and} column of
$T$, \MG{$T$} becomes a block Toeplitz matrix. \MG{The best choice of
  the parameters minimizes the spectral radius $\rho(T)$ over a
  numerically relevant range of frequencies
  $K:=[\tilde{k}_{\min},\tilde{k}_{\max}]$ with
  $\tilde{k}_{\min}:=\frac{\pi}{\hat{L}}$ (or $0$ for simplicity) and
  $\tilde{k}_{\max}:=\frac{M\pi}{\hat{L}}$, $M\sim\frac{1}{h}$, where $h$
  is the mesh size, and is thus solution of the min-max problem
  $\min_{p_j^{\pm}}\max_{\tilde{k}\in
    K}|\rho(T(\tilde{k},p_j^{\pm}))|.$ }

The traditional approach to obtain optimized transmission conditions
for optimized Schwarz methods is to optimize performance for a simple
two subdomain model problem, and then to use the result also in the
case of many subdomains. We want to study here if this approach is
justified, by directly optimizing the performance for two and more
subdomains, and then comparing the results. We obtain
  our results from insight by numerical optimisation for small
  overlap, in order to find asymptotic formulas for the convergence
  factor and the parameters involved. The constants in the asymptotic
  results are then obtained by rigorous analytical computations of
  asymptotic series. We thus do not obtain existence and uniqueness
  results, but our asymptotically optimized convergence factors
  equioscillate as one would expect. For Robin conditions with complex
  parameters for two subdomains, existence and uniqueness results can
  be found in B. Delourme and L. Halpern \cite{hal-02554807}.

\section{Optimization for 2, 3, 4, \MG{5 and 6} subdomains}

For two subdomains, the general substructured iteration matrix becomes
$$
T= \left[\begin{array}{cc}
0 & \beta_1^+ \\
\beta_2^- & 0 
\end{array}\right].
$$
The eigenvalues of this matrix are $\pm \sqrt{ \beta_1^+ \beta_2^-}$ and thus the square of the
convergence factor is $\rho^2 = \left| \beta_1^+ \beta_2^- \right|$. 

\begin{theorem}[\MG{Two Subdomain Optimization}]
  Let $s\MG{:=}\sqrt{\sigma - i\varepsilon}$, where the complex square
  root is taken with a positive real part, and let $C$ be the real
  constant
  \label{Theorem2sub}
  \begin{equation}
  \label{eq:C}
    C \MG{:=} \Re \frac{s ((p_b + s)(p_a + s)-(s - p_b)(s - p_a)
      e^{-4sL} )}{((s - p_a)e^{-2sL}+ s + p_a)((s - p_b)e^{-2sL} + s +
      p_b)}.
  \end{equation}
  where $p_a$ and $p_b$ are the Robin parameters at the outer boundaries.
  \MG{Then for} two subdomains with $p_1^{+}=p_2^{-}\MG{=:}p$ and
  \MG{$\tilde{k}_{\min}=0$}, the asymptotically optimized parameter
  $p$ for small overlap $\delta$ and \MG{associated} convergence
  factor are
  \begin{equation}\label{eq:p2dom}
    p = 2^{-1/3}C^{2/3} \delta^{-1/3}, \quad
    \rho = 1 - 2 \cdot 2^{1/3}C^{1/3} \delta^{1/3} + {\cal O}(\delta^{2/3}).
  \end{equation} 
  If $p_1^{+}\ne p_2^{-}$ \MG{and} $\tilde{k}_{min}=0$, the asymptotically
  optimized parameters for small overlap $\delta$ and associated
  convergence factor are
  \begin{equation}\label{eq:2p2dom}
    p_1^+=  2^{-2/5} C^{2/5} \delta^{-3/5},\, p_2^-=  2^{-4/5} C^{4/5} \delta^{-1/5},\, \rho= 1- 2 \cdot 2^{-1/5}C^{1/5}  \delta^{1/5} + {\cal O}(\delta^{2/5}).
\end{equation} 
\end{theorem}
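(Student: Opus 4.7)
My plan is to exploit the simple $2\times 2$ structure of $T$ and carry out a small-overlap asymptotic analysis of the min-max problem $\min_{p_1^+,p_2^-}\max_{\tilde k\ge 0}|\rho|$. Setting $p_1^-=p_a$ and $p_2^+=p_b$ in the explicit formulas for $\beta_1^+$ and $\beta_2^-$ given above, the squared convergence factor becomes a fully explicit function $|\rho(\tilde k)|^2=|\beta_1^+\beta_2^-|$ of $\tilde k$, $p_1^+$, $p_2^-$ and $\delta$, with $\lambda=\sqrt{\tilde k^2+s^2}$. I would then attack this min-max problem by the classical equioscillation principle used throughout the optimized Schwarz literature.

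Two asymptotic regimes need to be extracted. At the endpoint $\tilde k=0$ we have $\lambda=s$; a joint expansion of $\beta_1^+(0)\beta_2^-(0)$ in the limit $p_1^+,p_2^-\to\infty$, followed by taking the modulus and extracting the real part, yields after simplification
$$|\rho(0)|\;=\;1-\frac{C}{p_1^+}-\frac{C}{p_2^-}+O\bigl(1/(p_1^+p_2^-)+\delta\bigr),$$
with the constant $C$ being precisely the real quantity displayed in \eqref{eq:C}. For large $\tilde k$, the $e^{\lambda L}$ terms dominate both numerators and denominators of $\beta_1^+$ and $\beta_2^-$; the dependence on $p_a,p_b$ drops out at leading order, and one recovers the familiar one-sided Robin factor
$$|\rho(\tilde k)|\;\sim\;\sqrt{\Bigl|\tfrac{\lambda-p_1^+}{\lambda+p_1^+}\Bigr|\,\Bigl|\tfrac{\lambda-p_2^-}{\lambda+p_2^-}\Bigr|}\;e^{-\Re(\lambda)\delta}.$$

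In the symmetric case $p_1^+=p_2^-=p$, I expect the maximum of $|\rho|$ to be attained simultaneously at $\tilde k=0$, where $|\rho(0)|\approx 1-2C/p$, and at a single interior critical frequency $\tilde k^{\ast}\sim\sqrt{2p/\delta}$, where $|\rho(\tilde k^\ast)|\approx 1-2\sqrt{2p\delta}$. Equating these two leading orders and solving for $p$ gives $p=2^{-1/3}C^{2/3}\delta^{-1/3}$ and the convergence factor stated in \eqref{eq:p2dom}. For the two-sided case I would enforce equioscillation at three frequencies, namely $\tilde k=0$ and two interior critical frequencies at the scales $\sqrt{p_2^-/\delta}$ and $\sqrt{p_1^+/\delta}$; inserting the ansatz $p_1^+=c_1\delta^{-\alpha_1}$, $p_2^-=c_2\delta^{-\alpha_2}$ and matching the leading orders of the three conditions forces $\alpha_1=3/5$, $\alpha_2=1/5$, and then determines $c_1,c_2$, producing \eqref{eq:2p2dom}.

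The delicate step is the precise identification of the constant $C$. This requires a careful expansion in inverse powers of $p_1^+,p_2^-$ of the closed-form expression for $\beta_1^+(0)\beta_2^-(0)$, systematic combination of the many $e^{\pm sL}$ and $e^{\pm s(L+\delta)}$ terms, and the recognition that, after summing the contributions from both factors, the real part collapses to the compact rational form in \eqref{eq:C}. A secondary technical point, which I would address by a sign analysis of $\partial_{\tilde k}|\rho|^2$ in the small-$\delta$ regime, is to verify that the candidate equioscillation frequencies indeed realise the global maximum of $|\rho|$ on $[0,\infty)$, so that no spurious local maxima undermine the argument.
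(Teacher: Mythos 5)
Your overall strategy --- reduce to $\rho^2=|\beta_1^+\beta_2^-|$, assume equioscillation of the min-max solution, posit power-law ansätze for the parameters and the interior maxima, and match leading orders --- is exactly the route the paper takes. The paper likewise treats equioscillation as a numerically observed fact rather than proving it, and your expansions in the one-parameter case are consistent with its relations: your $\rho(0)\approx 1-2C/p$ together with $\tilde k^*\sim\sqrt{2p/\delta}$ and $\rho(\tilde k^*)\approx 1-2\sqrt{2p\delta}$ reproduce $C_p=C_k^2/2$, $C_R=2C_k$ and $C_k^3=2C$, hence \eqref{eq:p2dom}.

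In the two-sided case, however, there is a concrete error in your identification of the equioscillation frequencies. You place the two interior maxima at the scales $\sqrt{p_2^-/\delta}\sim\delta^{-3/5}$ and $\sqrt{p_1^+/\delta}\sim\delta^{-4/5}$. The second is correct, but the first is not: $\sqrt{p_2^-/\delta}$ is of the same order as $p_1^+\sim\delta^{-3/5}$, i.e.\ it sits where $\lambda\approx p_1^+$ and the factor $|\lambda-p_1^+|/|\lambda+p_1^+|$ is near its \emph{minimum}, so this is a local minimum of $\rho$, not a maximum. The correct first interior maximum lies at the geometric-mean scale $\tilde k_1^*\sim\sqrt{p_1^+p_2^-}\sim\delta^{-2/5}$ (the paper's $\tilde k_1^*=C_{k1}\delta^{-2/5}$), obtained by balancing $2p_2^-/\tilde k^2$ against $2p_1^+/((p_1^+)^2-\tilde k^2)$ in $\partial_{\tilde k}\log\rho=0$, the overlap factor $e^{-\lambda\delta}$ being negligible at that scale. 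With your choice of scales the three matching conditions do not close to the relations $C_{p1}=C_{k2}^2$, $C_{p2}=C_{k1}^2/C_{k2}^2$, $C_{k1}=C_{k2}^3$ that the paper derives, so the claim that matching ``forces'' $\alpha_1=3/5$, $\alpha_2=1/5$ would not survive the computation as written. The fix is local --- replace the first critical frequency by $\sqrt{p_1^+p_2^-}$ --- after which your plan coincides with the paper's proof; your promised sign analysis of $\partial_{\tilde k}|\rho|^2$ would in fact have exposed the misplacement.
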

\begin{proof}
\MG{From numerical experiments, we obtain that the solution of the
  min-max problem equioscillates, $\rho(0) = \rho(\tilde{k}^*)$, where
  $\tilde{k}^*$ is an interior maximum point, and asymptotically $p =
  C_p \delta^{-1/3}$, $\rho = 1 - C_R \delta^{1/3} + {\cal
    O}(\delta^{2/3})$, and} $\MG{\tilde{k}}^* = C_k \delta^{-2/3}$. By
\MG{expanding} for $\delta$ small, \MG{and setting the leading term in
  the derivative} $\frac{\partial \rho}{\partial \tilde{k}}
(\tilde{k}^*)$ to zero, we get $C_p = \frac{C_k^2}{2}$. \MG{Expanding
  the maximum leads to $\rho(\tilde{k}^*)= \rho(C_k \delta^{-2/3}) = 1
  - 2 C_k \delta^{1/3} +{\cal O}(\delta^{2/3})$,} therefore $C_R =
2C_k$. Finally the solution \MG{of the equioscillation equation
  $\rho(0) = \rho(\MG{\tilde{k}}^*)$ determines uniquely $C_k= 2^{1/3}C^{1/3}$}.

In the case with two parameters, \MG{we have two equioscillations,
  $\rho(0)=\rho(\tilde{k}_1^*) = \rho(\tilde{k}_2^*)$, where
  $\tilde{k}_j^*$ are two interior local maxima, and} asymptotically
$p_1 = C_{p1} \delta^{-3/5}$, $p_1 = C_{p1} \delta^{-1/5}$, $\rho = 1
- C_R \delta^{1/5} + {\cal O}(\delta^{2/5})$, $\tilde{k}_1^* = C_{k1}
\delta^{-2/5}$ and $\tilde{k}_2^* = C_{k2} \delta^{-4/5}$. By
\MG{expanding for $\delta$ small, and setting the leading terms in the
  derivatives} $\frac{\partial \rho}{\partial k}
(\MG{\tilde{k}}_{1,2}^*)$ to zero, and we get $C_{p1} = {C_{k2}^2}$,
$C_{p2}=\frac{C_{k1}^2}{C_{k2}^2}$. \MG{Expanding the maxima} leads to
$\rho(\MG{\tilde{k}}_1^*)= \rho(C_k \delta^{-2/5}) = 1 - 2
\frac{C_{k1}}{C_{k2}^2} \delta^{1/5} +{\cal O}(\delta^{2/5})$ and
$\rho(\MG{\tilde{k}}_2^*)= \rho(C_k \delta^{-4/5}) = 1 - 2 C_{k_2}
\delta^{1/5} +{\cal O}(\delta^{2/5})$ and equating
$\rho(\MG{\tilde{k}}_1^*) = \rho(\MG{\tilde{k}}_2^*)$ we get $C_{k1} =
C_{k2}^3$ and $C_R = 2C_{k2}$.  Finally \MG{equating $\rho(0) =
  \rho(\MG{\tilde{k}}_2^*)$ asymptotically determines} uniquely
$C_{k2} = 2^{-1/5} C^{1/5}$ and then $C_{k1} = C_{k2}^3$ and $C_{p1}=
C_{k2}^2$, $C_{p2}= C_{k2}^4$.

\end{proof}

\begin{corollary}[Two Subdomains with Dirichlet \MG{outer} boundary conditions]
\MG{The case of Dirichlet outer boundary conditions can be obtained by
  letting $p_a$ and $p_b$ go to infinity, which simplifies}
  \eqref{eq:C} to
\begin{equation}
\label{eq:condDir}
C = \Re\frac{s(1+e^{2sL}) } {(e^{2sL}-1 )}
\end{equation}
and the \MG{asymptotic results in Theorem \ref{Theorem2sub} simplify}
accordingly.
\end{corollary}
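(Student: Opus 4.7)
The plan is to derive \eqref{eq:condDir} as the pointwise limit of the expression \eqref{eq:C} when $p_a,p_b\to\infty$, and then simply substitute the resulting $C$ into the formulas \eqref{eq:p2dom} and \eqref{eq:2p2dom} of Theorem~\ref{Theorem2sub}, since those asymptotic formulas depend on $p_a$ and $p_b$ only through $C$.

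First, I would justify why the Dirichlet boundary case is recovered in the $p_a,p_b\to\infty$ limit of the Robin problem. Dividing the outer Robin conditions $-\partial_x u+p_a u=g_a$ at $x=a_1$ and $\partial_x u+p_b u=g_b$ at $x=b_J$ by $p_a$ and $p_b$ respectively and letting these parameters tend to $\infty$ yields homogeneous Dirichlet conditions, so the convergence constant should indeed be the limit of \eqref{eq:C}. This matches the fact that the Robin-trace iteration matrix derived via \eqref{eq:8}--\eqref{eq:9} depends continuously on $p_a,p_b$.

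Next, I would take the limit in \eqref{eq:C} directly. In the numerator, both $(p_b+s)(p_a+s)$ and $(s-p_b)(s-p_a)$ contribute a leading $p_ap_b$ term as $p_a,p_b\to\infty$, giving $s\,p_ap_b(1-e^{-4sL})$ at leading order. In the denominator, each factor $(s-p_a)e^{-2sL}+s+p_a$ rearranges as $p_a(1-e^{-2sL})+s(1+e^{-2sL})$ with leading order $p_a(1-e^{-2sL})$, and analogously for $p_b$. Dividing, the $p_ap_b$ factors cancel and one is left with
\begin{equation*}
\lim_{p_a,p_b\to\infty} C \;=\; \Re\frac{s(1-e^{-4sL})}{(1-e^{-2sL})^2} \;=\; \Re\frac{s(1+e^{-2sL})}{1-e^{-2sL}},
\end{equation*}
using the factorization $1-e^{-4sL}=(1-e^{-2sL})(1+e^{-2sL})$. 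Multiplying numerator and denominator by $e^{2sL}$ gives exactly \eqref{eq:condDir}.

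Finally, since Theorem~\ref{Theorem2sub} was proved using only the structure of the equioscillation conditions and the definition of $C$ as the relevant leading coefficient extracted from $\beta_1^+\beta_2^-$ at $\tilde k=0$, the asymptotic formulas \eqref{eq:p2dom} and \eqref{eq:2p2dom} remain valid with $C$ replaced by \eqref{eq:condDir}. The only step requiring mild care is checking that the convergence in $p_a,p_b$ is uniform in $\tilde{k}$ on the relevant range, so that the $\delta$-asymptotics commute with the limit; this follows because the dominant $p_ap_b$ terms in numerator and denominator depend only on $L$ and $s(\tilde k)$, and the subdominant corrections are $O(1/p_a)+O(1/p_b)$ locally uniformly in $\tilde k$. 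No new obstacle arises beyond this bookkeeping.
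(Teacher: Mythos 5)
Your proposal is correct and follows essentially the same route as the paper, which simply takes the limit $p_a,p_b\to\infty$ in \eqref{eq:C}; your explicit computation (leading $p_ap_b$ terms, the factorization $1-e^{-4sL}=(1-e^{-2sL})(1+e^{-2sL})$, and multiplication by $e^{2sL}$) correctly reproduces \eqref{eq:condDir}. The additional remarks on uniformity of the limit are more detail than the paper provides but do not change the argument.
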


For three subdomains, the general substructured iteration matrix becomes
$$
T= \left[ \begin{array}{cccc}
0&\beta_1^+&\alpha_1^+&0\\
\beta_2^-&0&0&0\\
0&0&0&\beta_2^+\\
0&\alpha_3^-&\beta_3^-&0
\end{array} \right],
$$
\MG{and we obtain for the first time an optimization result for three
  subdomains:}
\begin{theorem}[\MG{Three Subdomain Optimization}]
For three subdomains with \MG{equal parameters}
$p_1^{+}=p_2^{-}=p_2^{+}=p_3^{-}=p$\MG{, the} asymptotically optimized
parameter $p$ for small overlap $\delta$ and \MG{associated}
convergence factor are
\label{Theorem3sub}
\begin{equation}\label{eq:p3dom}
  p = 2^{-1/3}C^{2/3} \delta^{-1/3}, \quad \rho = 1 - 2 \cdot 2^{1/3}C^{1/3} \delta^{1/3} + {\cal O}(\delta^{2/3})\MG{,}
\end{equation} 
where $C$ is \MG{a real constant that can be obtained in closed
  form.}  If the parameters are different, \MG{their} asymptotically
optimized \MG{values} for small overlap $\delta$ are such that
\begin{equation}\label{eq:2p3dom}
    p_1^+, \, p_2^+ ,\,p_2^-,\,p_3^-\in \{2^{-2/5} C^{2/5}
    \delta^{-3/5},\, 2^{-4/5} C^{4/5} \delta^{-1/5}\},\ p_1^+\ne p_2^-,\
    p_2^+ \ne p_3^-,
  \end{equation} 
 and the associated convergence factor is
  \begin{equation}
  \label{eq:rho}
  \rho=  1- 2 \cdot 2^{-1/5}C^{1/5}  \delta^{1/5} + {\cal O}(\delta^{2/5}).
\end{equation}
\end{theorem}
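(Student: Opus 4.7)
The plan is to mirror the proof of Theorem \ref{Theorem2sub}, adapted to the $4\times 4$ substructured matrix $T$ of three subdomains. First I would exploit the sparsity of $T$: because rows $2$ and $3$ of $T$ each contain only one nonzero entry, the matrix $T^2$ decouples, after reordering the basis as $(1,4,2,3)$, into two $2\times 2$ blocks with the same characteristic polynomial. Hence $\rho(T)=\sqrt{\rho(M)}$, where
\[
  M = \left[\begin{array}{cc} \beta_1^+\beta_2^- & \alpha_1^+\beta_2^+ \\ \alpha_3^-\beta_2^- & \beta_3^-\beta_2^+ \end{array}\right],
\]
and the eigenvalues $\mu_\pm$ of $M$ are explicit via its trace and determinant.

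I would then adopt the same ansatz as in Theorem \ref{Theorem2sub}. In the equal-parameter case $p_1^+=p_2^-=p_2^+=p_3^-=p$, numerical experiments again suggest equioscillation $\rho(0)=\rho(\tilde{k}^*)$ with $p=C_p\delta^{-1/3}$, $\tilde{k}^*=C_k\delta^{-2/3}$, and $\rho = 1 - C_R\delta^{1/3} + \mathcal{O}(\delta^{2/3})$. With all four interior parameters equal, $\alpha_1^+=\alpha_3^-$ and $\beta_1^+=\beta_3^-$, so the dominant eigenvalue $\mu_+$ of $M$ is analytically tractable. At the interior maximum $\tilde{k}^*=C_k\delta^{-2/3}$ the exponentials $e^{-\lambda L}$ are transcendentally small and the $\alpha$-contributions to $\mu_+$ are subleading, so setting $\partial_{\tilde{k}}\rho(\tilde{k}^*)=0$ and expanding $\rho(\tilde{k}^*)$ should reproduce $C_p=C_k^2/2$ and $C_R=2C_k$ exactly as in Theorem \ref{Theorem2sub}. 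Substituting $\lambda=s$ at $\tilde{k}=0$ reduces $\mu_+(0)$ to an explicit rational function of $s,L,p_a,p_b,p$ whose large-$p$ expansion has the form $\rho(0) = 1 - 2C/p + \mathcal{O}(1/p^2)$ with a closed-form real constant $C$; then $\rho(0)=\rho(\tilde{k}^*)$ fixes $C_k = 2^{1/3}C^{1/3}$ and yields \eqref{eq:p3dom}.

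For the two-parameter case I would repeat the argument with the two-parameter ansatz from Theorem \ref{Theorem2sub}: two interior local maxima $\tilde{k}^*_1=C_{k_1}\delta^{-2/5}$ and $\tilde{k}^*_2=C_{k_2}\delta^{-4/5}$, with the two parameters scaling like $\delta^{-3/5}$ and $\delta^{-1/5}$. Setting the two leading-order derivative conditions to zero gives $C_{p_1}=C_{k_2}^2$ and $C_{p_2}=C_{k_1}^2/C_{k_2}^2$; the equioscillation $\rho(\tilde{k}^*_1)=\rho(\tilde{k}^*_2)$ forces $C_{k_1}=C_{k_2}^3$ and $C_R=2C_{k_2}$; finally $\rho(0)=\rho(\tilde{k}^*_2)$ determines $C_{k_2}=2^{-1/5}C^{1/5}$. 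Together with the constraints $p_1^+\ne p_2^-$ and $p_2^+\ne p_3^-$ this yields the four admissible assignments in \eqref{eq:2p3dom} and the convergence factor \eqref{eq:rho}.

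The hard part will be the bookkeeping needed to extract the closed-form constant $C$ from $\mu_+(0)$: the boundary parameters $p_a$ and $p_b$ appear asymmetrically through $\beta_2^-$ and $\beta_2^+$, and one must verify carefully that the $\alpha$-terms at $\tilde{k}=0$ contribute only at subleading order in $1/p$ so that the leading coefficient is captured correctly. Once this is settled, the rest of the proof is a direct transcription of the two-subdomain argument; the conceptual conclusion is that the same exponents of $\delta$ and the same numerical prefactors $2^{\pm1/3}$ and $2^{\pm1/5}$ appear as in Theorem \ref{Theorem2sub}, with only the value of $C$ shifting from \eqref{eq:C} to its three-subdomain analog.
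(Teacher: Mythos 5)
Your proposal is correct and follows essentially the same route as the paper: your reordering of $T^2$ into two $2\times 2$ blocks is just another way of obtaining the paper's biquadratic characteristic polynomial $G(\mu)=\mu^4-m_1\mu^2+\det M$ (with $m_1=\operatorname{tr}M$), and the subsequent asymptotic equioscillation analysis with the ansatz exponents, derivative conditions, and the equations fixing $C_k$, $C_{k_1}$, $C_{k_2}$ is exactly the argument the paper imports from Theorem~\ref{Theorem2sub} (deferring the bookkeeping for the closed-form $C$ to the thesis, as you also anticipate).
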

\begin{proof}
The characteristic polynomial of the iteration matrix is
\[
G(\mu ) = {\mu ^4} - ( {\beta_{2, - }}{\beta_{1, + }} +{\beta_{3, - }}{\beta_{2, +
}}){\mu ^2} - {\alpha_3}{\beta_{2, - }}{\alpha_1}{\beta_{2, + }} + {\beta_{3, - }}{\beta_{2, +
}}{\beta_{2, - }}{\beta_{1, + }}.
\]
This biquadratic equation has the roots $
{\mu _1} = \pm\sqrt {\frac{{{m_1} + \sqrt {{m_2}} } }{2}}, 
{\mu _2} = \pm\sqrt {\frac{{\sqrt {{m_1} - \sqrt {{m_2}} } }}{2}}$
where
\[
  \begin{array}{l}
    {m_1} = {\beta_{2, - }}{\beta_{1, + }} + {\beta_{3, - }}{\beta_{2, + }},\, {m_2}= 4{\alpha_3}{\beta_{2, - }}{\alpha_1}{\beta_{2, + }} + ({\beta_{2, - }}{\beta_{1, + }} - {\beta_{3, - }}{\beta_{2, + }})^2\MG{.}
  \end{array}
\]
Therefore $\rho (T) = \max \{|\mu _1|,|\mu _2|\}$. Following the same
reasoning as in the proof of Theorem \ref{Theorem2sub}, we observe
that the solution equioscillates, and minimizing the
maximum asymptotically for $\delta$ small then leads to the desired
result, for more details, see \cite{ThesisAlex}.
\end{proof}
\MG{Notice that the optimized parameters and the relation between them is
the same as in the two-subdomain case, the only difference is the
equation whose solution gives the exact value of the constant $C$. The
only difference between a two subdomain optimization and a three
subdomain optimization is therefore the constant.
}
\begin{corollary}[Three subdomains with Dirichlet \MG{outer} boundary conditions]
  When Dirichlet boundary conditions are used at the end of the
  computational domain, we obtain \MG{for the constant}
\begin{equation}\label{eq:condDir3}
  C = \Re\frac{s (e^{2sL} - e^{sL} +1)}{e^{2sL}-1}\MG{,}
\end{equation}
  which is different from the two subdomain constant in
  \eqref{eq:condDir}.
\end{corollary}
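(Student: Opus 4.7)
The plan is to follow the argument of Theorem~\ref{Theorem3sub} while inserting the Dirichlet outer conditions $p_a = p_1^- \to \infty$ and $p_b = p_3^+ \to \infty$, then track how the constant $C$ changes. Inspecting the iteration matrix $T$ for three subdomains, only $\beta_2^-$ depends on $p_a$ and only $\beta_2^+$ depends on $p_b$, while $\alpha_1^+$, $\alpha_3^-$, $\beta_1^+$ and $\beta_3^-$ are untouched by these limits. The first step is therefore to factor $p_a$ (respectively $p_b$) out of the numerator and denominator of $\beta_2^-$ (respectively $\beta_2^+$) and pass to the limit, exactly as in the reduction from \eqref{eq:C} to \eqref{eq:condDir} for two subdomains.

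Next, I would substitute the simplified coefficients into $m_1$ and $m_2$ and specialize to the single-parameter case $p_1^+ = p_2^- = p_2^+ = p_3^- = p$, evaluating at $\tilde{k} = 0$ so that $\lambda = s$. This is the critical point, since the equioscillation relation $\rho(0) = \rho(\tilde{k}^*)$ anchors the value of $C$. With the ansatz $p = C_p \delta^{-1/3}$ and the relations $C_p = C_k^2/2$ (from setting the leading term of $\partial_{\tilde{k}}\rho(\tilde{k}^*)$ to zero) and $C_k = 2^{1/3} C^{1/3}$ (from equioscillation), as established in the proof of Theorem~\ref{Theorem2sub}, the leading-order expansion of $\max\{|\mu_1(0)|, |\mu_2(0)|\}$ isolates $C$ as a specific rational function of $s$ and $e^{sL}$.

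The main obstacle is purely algebraic: showing that this rational function collapses to the claimed closed form $\Re \frac{s(e^{2sL} - e^{sL} + 1)}{e^{2sL}-1}$. The extra $-e^{sL}$ term, absent from the two-subdomain constant \eqref{eq:condDir}, must emerge from the interplay between $(\beta_{2,-}\beta_{1,+} - \beta_{3,-}\beta_{2,+})^2$ and the product $4\alpha_3^-\beta_{2,-}\alpha_1^+\beta_{2,+}$ inside $m_2$, both of which have no two-subdomain analogue and both of which become nontrivial once the Dirichlet limits have been taken. I would perform the final simplification symbolically, look for a common factor of $(e^{2sL}-1)^{-1}$ after clearing denominators, and validate the identity numerically at several values of $s$ and $L$ before committing to the closed form.
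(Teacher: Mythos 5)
Your plan is correct and follows essentially the same route as the paper, which itself obtains the Dirichlet corollaries by letting $p_a,p_b\to\infty$ in the relevant coefficients and then reusing the equioscillation/asymptotic machinery of Theorems~\ref{Theorem2sub} and \ref{Theorem3sub} (with the algebra deferred to the thesis); your identification of $\beta_2^-$ and $\beta_2^+$ as the only entries affected by the limit is accurate. One minor refinement: in the equal-parameter case the term $(\beta_{2,-}\beta_{1,+}-\beta_{3,-}\beta_{2,+})^2$ in $m_2$ is of higher order than $4\alpha_3^-\beta_{2,-}\alpha_1^+\beta_{2,+}$ as $\delta\to 0$, so the extra $-e^{sL}$ in \eqref{eq:condDir3} comes entirely from the latter product, which your proposed symbolic computation will confirm.
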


For \MG{four subdomains, we show in Table \ref{Table:4sub}}
\begin{table}[t]
  \centering
  \caption{Asymptotic results for four subdomains: $\sigma = \varepsilon = 1, L=1, p_a = p_b = 1$}
  \label{Table:4sub}
  \begin{tabular}{c|ccccccc|cc}
    \hline\noalign{\smallskip}
		& \multicolumn{7}{c|}{Many parameters} &  \multicolumn{2}{c}{One parameter}  \\
		\noalign{\smallskip}\svhline\noalign{\smallskip}
		$\delta$ & $\rho$ & $p_1^+$ & $p_2^-$ & $p_2^+$ & $p_3^-$ & $p_3^+$ & $p_4^-$ & $\rho$ & $p$ \\
		\noalign{\smallskip}\svhline\noalign{\smallskip}
		$1/10^2$ & 0.5206 & 13.1269  & 1.2705 & 10.1871& 0.7748 & 16.5975  & 2.1327 & 0.6202 & 2.8396 \\
		$1/10^3$ & 0.6708 & 37.9717 & 1.4208 & 42.9379 & 1.6005 & 68.1923 & 2.4896  & 0.8022 & 6.0657\\
		$1/10^4$ & 0.7789 & 152.9323 & 2.3266 & 152.0873  & 3.1841 & 161.0389 &  2.4919 & 0.9029& 13.0412 \\
		$1/10^5$ & 0.8510 & 651.7536  & 4.1945 & 645.0605 & 4.1519 & 649.8928 &  4.1828 &0.9537 & 28.0834\\
		\noalign{\smallskip}\svhline\noalign{\smallskip}
	\end{tabular}
\end{table}
\MG{the numerically optimized parameter values when the overlap
  $\delta$ becomes small. We observe that again the optimized
  parameters behave like in Theorem \ref{Theorem2sub} and Theorem
  \ref{Theorem3sub} when the overlap $\delta$ becomes small. It is in
  principle possible to continue the asymptotic analysis from two and
  three subdomains, but this is beyond the scope of the present paper.
  Continuing the numerical optimization for five and six subdomains, we
get the results in Table \ref{Table:5sub} and Table \ref{Table:6sub},}
\begin{table}[t]
  \tabcolsep0.1em
	\centering
	\caption{Asymptotic results for five subdomains : $\sigma = \varepsilon = 1, L=1, p_a = p_b = 1$}
	\label{Table:5sub}
	\begin{tabular}{c|ccccccccc|cc}
	\hline\noalign{\smallskip}
		& \multicolumn{9}{c|}{Many parameters} &  \multicolumn{2}{c}{One parameter}  \\
		\noalign{\smallskip}\svhline\noalign{\smallskip}
		$\delta$ & $\rho$ & $p_1^+$ & $p_2^-$ & $p_2^+$ & $p_3^-$ & $p_3^+$ & $p_4^-$ & $p_4^+$ & $p_5^-$ &  $\rho$ & $p$ \\
		\noalign{\smallskip}\svhline\noalign{\smallskip}
		$1/10^2$ & 0.5273 & 8.5648  &  1.4619 &   9.1763 &   0.8030  &  9.1398  &  0.8426 &  15.5121&    2.2499 & 0.6290 & 2.6747 \\
		$1/10^3$ & 0.7333 & 24.6097  &   0.9209  & 23.4189  &  0.4499  & 37.2200  &  0.8433  & 34.8142 &   0.9181 & 0.8072 & 5.7261\\
		$1/10^4$ & 0.7769 & 156.0648  &  2.4223 & 156.0502  &  2.4221 & 161.2036  &  2.5009  &166.3478  &  2.5941 & 0.9055 & 12.3166 \\
		$1/10^5$ & 0.8547  & 704.4063  & 4.3378 & 611.3217  &  3.7296 & 611.3217   & 3.7296  & 690.8837   & 4.2116 & 0.9550& 26.5260\\
		\noalign{\smallskip}\svhline\noalign{\smallskip}
	\end{tabular}
\end{table}
\begin{table}[t]
  \tabcolsep0.1em
	\centering
	\caption{Asymptotic results for six subdomains: $\sigma = \varepsilon = 1, L=1, p_a = p_b = 1$}
	\label{Table:6sub}
	\begin{tabular}{c|ccccccccccc}
		\noalign{\smallskip}\svhline\noalign{\smallskip}
		$\delta$ & $\rho$ & $p_1^+$ & $p_2^-$ & $p_2^+$ & $p_3^-$ & $p_3^+$ & $p_4^-$ & $p_4^+$ & $p_5^-$ &  $p_5^+$ & $p_6^-$ \\
		\noalign{\smallskip}\svhline\noalign{\smallskip}
		$1/10^2$ & 0.5460 & 10.5283  &  1.4526  &  7.7653   & 1.2124 &   8.2834  &  0.6573  &  7.6445  &  1.3410 & 8.0029   & 0.9586 \\
		$1/10^3$ & 0.7011 &  30.3314  &  0.9049  & 30.3452   & 1.1096  & 30.3010  &  0.9363  & 30.3458   & 0.8901 &  30.1139  &1.1307 \\
		$1/10^4$ & 0.7837 & 145.7147  &  2.1126 & 146.4533  &  2.1231  & 145.7147  &  2.1126  &149.1802  &  2.1743 &  146.7200 & 2.1909\\
		$1/10^5$ & 0.8553 &  660.5326  &  3.9932 & 611.9401  & 3.7012 &  606.1453   & 3.6661 & 606.1144  &  3.6659  & 606.0914 & 3.8534 \\
		\noalign{\smallskip}\svhline\noalign{\smallskip}
	\end{tabular}
\end{table}
\MG{which show again the same asymptotic behavior. We therefore
  conjecture the following two results for an arbitrary fixed number
  of subdomains:}
\begin{enumerate}
  \item When all parameters are equal to $p$, then the asymptotically
    optimized parameter $p$ for small overlap $\delta$ and the
    \MG{associated} convergence factor have the same form as for two-subdomains
    \eqref{eq:p2dom} \MG{in Theorem \ref{Theorem2sub}, only the constant
    is different.}
\item If all parameters \MG{are allowed to be different, the optimized
  parameters behave for small overlap $\delta$} like
  $$
     p_j^+, \, p_{j+1}^-\in \{ 2^{-2/5} C^{2/5} \delta^{-3/5}, \,2^{-4/5} C^{4/5} \delta^{-1/5}\} \mbox{ and } p_j^+ \ne p_{j+1}^- \, \forall  j=1..,J-1,
  $$
  \MG{as we have seen in the three subdomain case in Theorem \ref{Theorem3sub},
  and we have again the same asymptotic convergence factor as for two
  and three subdomains, only the constant is
    different.}
\end{enumerate}

\section{Optimization for many subdomains}

In order to obtain a \MG{theoretical} result for many subdomains, we
use the technique of limiting spectra \cite{Bootland:2021:APS} to
\MG{derive} a bound on the spectral radius which we can then
minimize. The technique of limiting spectra allows us
  to get an estimate of the spectral radius when the matrix size goes
  to infinity.  \MG{To do so, we must however assume that the outer
  Robin boundary conditions use the same optimized parameter as at the
  interfaces, in order to have the Toeplitz structure needed for the
  limiting spectrum approach.}
\begin{theorem}[\MG{Many Subdomain Optimization}]
  With all Robin parameters equal, $p_j^{-}=p_j^{+}=p$, the
  convergence factor of the OSM satisfies the bound
  \[
    \rho = \mathop {\lim }\limits_{N \to  + \infty } \rho ({T_{2d}^{OS}}) \le \max \Big\{\left| {\alpha -\beta} \right|,\left| {\alpha + \beta} \right|\Big\}<1\MG{,}
  \]
  where 
  $
  \alpha  =\frac{(\lambda+p)^2 e^{\lambda
    \delta}-(\lambda-p)^2 e^{-\lambda \delta}}{(\lambda+p)^2e^{\lambda (L + \delta )}
  - (\lambda-p)^2 e^{ - \lambda (L + \delta )}}$, 
  $\beta =\frac{(\lambda - p)(\lambda + p) (e^{-\lambda L}-e^{\lambda L})}{(\lambda+p)(\lambda+p)e^{\lambda (L + \delta )} - (\lambda-p)(\lambda-p)e^{ - \lambda (L + \delta )}} 
  $.
  The asymptotically optimized parameter \MG{and associated
    convergence factor are}
    \begin{equation}\label{eq:Ndom}
      p= 2^{-1/3} \MG{C}^{2/3}\delta^{-1/3},\quad
      \rho=  1- 2\cdot 2^{1/3}\MG{C}^{1/3}\delta^{1/3}+{\cal O}(\delta^{2/3}) 
  \end{equation}
  \MG{with the constant $C:=\Re \frac{s(1-e^{-sL})}{1+e^{-sL}}$}.
  If we allow two-sided Robin parameters, $p_j^{-}=p^-$ and
  $p_j^+=p^{+}$, the OSM convergence factor satisfies \MG{the bound}
  \[
    \rho = \mathop {\lim }\limits_{N \to  + \infty } \rho ({T_{2d}^{OS}}) \le \max \Big\{\left| {\alpha -\sqrt{ \beta_{-} \beta_{+}}} \right|,\left| {\alpha + \sqrt{\beta_{-}\beta_{+}}} \right|\Big\}<1\MG{,}
  \]
  where 
  $
  \alpha = \frac{(\lambda+p^+)(\lambda+p^-) e^{\lambda
    \delta}-(\lambda-p^+) (\lambda-p^-)e^{-\lambda \delta}}{D}$, $\beta^{\pm} =\frac{(\lambda^2 - (p^{\mp})^2)(e^{-\lambda L}-e^{\lambda L})}{D}$, 
  with $D=(\lambda+p^+)(\lambda+p^-)e^{\lambda (L + \delta )} - (\lambda-p^+)(\lambda-p^-)e^{ - \lambda (L + \delta )}$.
  The asymptotically optimized parameter choice $p^- \ne p^+$ \MG{
   and the associated convergence factor are}    
  $$
    p^-,p^+ \in \left\{ \MG{C}^{2/5}\delta^{-3/5},\, \MG{C}^{4/5}\delta^{-1/5}  \right\},
   \quad \rho = 1- 2C^{1/5}\delta^{1/5} + {\cal O}(\delta^{2/5}),
   $$
   \MG{with the same constant $C:=\Re \frac{s(1-e^{-sL})}{1+e^{-sL}}$ as for one parameter}. 
\end{theorem}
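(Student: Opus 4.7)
The plan is to exploit the block Toeplitz structure produced by taking all interface Robin parameters equal, and then to apply the limiting spectra technique of \cite{Bootland:2021:APS}. With constant parameters across $j$, the blocks $T_j^1$ and $T_j^2$ become independent of $j$; call them $T^1$ and $T^2$. Dropping the first and last row and column of $T$ (a rank-$O(1)$ perturbation that does not affect the limiting spectrum as $N\to\infty$) leaves a Toeplitz matrix with $2\times 2$ blocks whose generating symbol is $M(\theta)=e^{-i\theta}T^1+e^{i\theta}T^2$ on $\theta\in[0,2\pi)$. By \cite{Bootland:2021:APS}, the limiting spectrum of $T$ is then $\bigcup_\theta \mathrm{spec}(M(\theta))$, so $\lim_{N\to\infty}\rho(T)\le \max_\theta \rho(M(\theta))$.

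For the one-parameter case, direct computation gives $M(\theta)$ with diagonal entries $\alpha e^{\mp i\theta}$ and off-diagonal entries $\beta e^{\mp i\theta}$, hence characteristic polynomial $\mu^2-2\alpha\cos\theta\,\mu+(\alpha^2-\beta^2)=0$ and eigenvalues $\mu_\pm(\theta)=\alpha\cos\theta\pm\sqrt{\beta^2-\alpha^2\sin^2\theta}$. I would then verify that $|\mu_\pm(\theta)|$ attains its maximum on $[0,2\pi)$ at $\theta=0$, where $\mu_\pm=\alpha\pm\beta$, yielding the stated bound $\max\{|\alpha-\beta|,|\alpha+\beta|\}$. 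Strict inequality with $1$ comes from the exponential gap between numerator and denominator in the definitions of $\alpha,\beta$ in view of $p,\delta>0$ and $\Re(\lambda)>0$. For the two-sided case, the same calculation with $T^1$ and $T^2$ carrying $\beta^{-}$ and $\beta^{+}$ in place of $\beta$ gives the characteristic polynomial $\mu^2-2\alpha\cos\theta\,\mu+(\alpha^2-\beta^-\beta^+)=0$, and the bound $\max\{|\alpha-\sqrt{\beta^-\beta^+}|,|\alpha+\sqrt{\beta^-\beta^+}|\}$ follows in exactly the same way.

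With the bound in hand, the asymptotic optimization follows the template of the proof of Theorem \ref{Theorem2sub} verbatim. For one parameter, I insert the ansatz $p=C_p\delta^{-1/3}$, $\tilde{k}^\ast=C_k\delta^{-2/3}$, $\rho=1-C_R\delta^{1/3}+{\cal O}(\delta^{2/3})$ into the bound, expand for small $\delta$, and impose both the leading-order stationarity $\partial_{\tilde{k}}\rho(\tilde{k}^\ast)=0$ and the equioscillation $\rho(0)=\rho(\tilde{k}^\ast)$. Evaluation at $\tilde{k}=0$ (so $\lambda=s$) of the leading part of the bound produces the constant $C=\Re\frac{s(1-e^{-sL})}{1+e^{-sL}}$, and the two asymptotic equations then pin down $C_p,C_k,C_R$ to exactly the values in \eqref{eq:Ndom}. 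For two parameters, the ansatz takes the form $p^+,p^-\in\{C^{2/5}\delta^{-3/5},C^{4/5}\delta^{-1/5}\}$ with two interior maxima $\tilde{k}_1^\ast=C_{k_1}\delta^{-2/5}$ and $\tilde{k}_2^\ast=C_{k_2}\delta^{-4/5}$; the three-way equioscillation $\rho(0)=\rho(\tilde{k}_1^\ast)=\rho(\tilde{k}_2^\ast)$ together with the two leading-order stationarity conditions fixes all the constants and reproduces the same $C$.

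The main obstacle is to verify rigorously that $\max_\theta\rho(M(\theta))$ is attained at $\theta=0$ for \emph{complex} $\alpha,\beta$, not merely in a real reduction, since $\theta\mapsto|\alpha\cos\theta\pm\sqrt{\beta^2-\alpha^2\sin^2\theta}|^2$ is not immediately monotone on $[0,\pi/2]$ when $\alpha,\beta\in\mathbb{C}$. A careful branch-cut treatment of the complex square root combined with the symmetry $\theta\leftrightarrow-\theta$ and a convexity argument for $|\cdot|^2$ should close this gap. A secondary subtlety is that the Toeplitz structure requires the outer Robin parameters to match the interface value; this is explicitly assumed in the theorem statement, so no additional argument is needed there.
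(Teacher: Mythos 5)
Your proposal follows essentially the same route as the paper: the bound comes from the limiting-spectra technique of \cite{Bootland:2021:APS} applied to the block Toeplitz symbol $e^{-i\theta}T^1+e^{i\theta}T^2$, and the asymptotics are obtained by the same numerically observed equioscillation ansatz and small-$\delta$ expansion used for two and three subdomains. You actually supply more detail than the paper's proof (which only cites the reference for the bound), and the $\theta=0$ maximization issue you flag for complex $\alpha,\beta$ is a genuine point that the paper also leaves implicit.
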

\begin{proof}
  As in the case of two and three subdomains, we
  observe equioscillation by numerical optimization, and
  asymptotically that $p = C_p \delta^{-1/3}$, $\rho = 1 - C_R
\delta^{1/3} + {\cal O}(\delta^{2/3})$ and the convergence factor has
a local maximum at the point $\MG{\tilde{k}}^* = C_k
\delta^{-2/3}$. By \MG{expanding} for small $\delta$, the
\MG{derivative} $\frac{\partial \rho}{\partial k} (\MG{\tilde{k}}^*)$
\MG{needs to have a vanishing leading order term, which leads to} $C_p
= \frac{C_k^2}{2}$. \MG{Expanding the convergence factor at the
  maximum point $\MG{\tilde{k}}^*$ gives} $\rho(\tilde{k}^*)= \rho(C_k
\delta^{-2/3}) = 1 - 2 C_k \delta^{1/3} +{\cal O}(\delta^{2/3})$,
\MG{and hence} $C_R = 2C_k$.  \MG{Equating now} $\rho(0) =
\rho(\MG{\tilde{k}}^*)$ determines uniquely $C_k$ and then $C_p =
\sqrt{C_k/2}$ giving \eqref{eq:Ndom}. By following the same lines
\MG{as for two and three subdomains, we also} get the asymptotic
result in the case of two different parameters.
\end{proof}

\MG{We can therefore safely conclude that for the magnetotelluric
  approximation of Maxwell's equations, which contains the important
  Laplace and screened Laplace equation as special cases, it is
  sufficient to optimize transmission conditions for a simple two
  subdomain decomposition in order to obtain good transmission
  conditions also for the case of many subdomains, a new result that
  was not known so far.}

\bibliographystyle{spmpsci}
\bibliography{references}

\end{document}